\newcommand*{\CQFD}{\hfill\ensuremath{\square}}
\DeclareMathOperator{\ssm}{Sm}
\DeclareMathOperator{\ssmr}{Smr}
\DeclareMathOperator{\ssmx}{Smp}
\newcommand{\NN}{\mathbb{N}}
\newcommand{\ZZ}{\mathbb{Z}}
\newcommand{\QQ}{\mathbb{Q}}
\newcommand{\KK}{\mathbb{K}}
\newcommand{\g}{\mbox{\tiny $g$}}
\newcommand{\sm}{\mbox{\small $\ssm$}}
\newcommand{\smr}{\mbox{\small $\ssmr$}}
\newcommand{\smx}{\mbox{\small $\ssmx$}}
\newcommand{\sx}{\mbox{$s_*$}}
\newcommand{\rs}{\mbox{$R_{\sigma}$}}
\newcommand{\mmod}[1]{\ \mathrm{mod}\ #1}
\DeclareMathOperator{\ord}{ord}
\DeclareMathOperator{\sol}{Sol}
\begin{document}

\title{Concatenations of Terms of an Arithmetic Progression}

\author{Florian Luca\orcidlink{0000-0003-1321-4422}\inst{1} \and Bertrand Teguia Tabuguia\orcidlink{0000-0001-9199-7077}\inst{2}}

\institute{School of Maths, Wits University, Johannesburg, South Africa\\
    \email{florian.luca@wits.ac.za}
    \and 
Department of Computer Science, University of Oxford, UK\\
    \email{bertrand.teguia@cs.ox.ac.uk}}

\maketitle              

\begin{abstract}

\vspace*{-.5em}

Let $\left(u(n)\right)_{n\in\NN}$ be an arithmetic progression of natural integers in base $b\in\NN\setminus \{0,1\}$. We consider the following sequences: {\small $s(n)=\overline{u(0)u(1)\cdots u(n) }^b$} formed by concatenating the first $n+1$ terms of $\left(u(n)\right)_{n\in\NN}$ in base $b$ from the right; {\small $s_{\g}(n) = \overline{u(n)u(n-1)\cdots u(0)}^b$}; and $\left(s_*(n)\right)_{n\in\NN}$, given by {\small $s_*(0)=u(0)$, $s_*(n)=\overline{s(n)s_{\g}(n-1)}^b, n\geq 1$}. We construct explicit formulae for these sequences and use basic concepts of linear difference operators to prove they are not P-recursive (holonomic). We also present an alternative proof that follows directly from their definitions. We implemented $\left(s(n)\right)_{n\in\NN}$ and $\left(s_{\g}(n)\right)_{n\in\NN}$ in the decimal base when $(u(n))_{n\in\NN}=\NN\setminus \{0\}$.

\keywords{Integer sequence \and hypergeometric term \and non-holonomic sequence}
\end{abstract}

\section{Introduction}
    Consider an integer sequence $(u(n))_{n\in\NN}$ (or simply $(u(n))_n$) and let $i,j\in\NN$. The integers $\overline{u(i)u(j)}$ and $\overline{u(j)u(i)}$ obtained by ``gluing together'' $u(i)$ and $u(j)$ defines two distinct {\it concatenations}. For example, if $(u(n))_n=\NN\setminus \{0\}=\{1,2,3,\ldots\}$, then $m_1=\overline{u(2)u(9)}=310$ and $m_2=\overline{u(9)u(2)}=103$. For concatenations of concatenations we use one overline instead of three. For instance, $310103=\overline{m_1m_2}$ is simply $\overline{u(2)u(9)u(9)u(2)}$ instead of $\overline{\overline{u(2)u(9)}\,\overline{u(9)u(2)}}$. When the digits involved in a concatenation are from a number base $b$, we say {\it concatenation in base $b$}. The above $m_1$ and $m_2$ may be seen as concatenations in a base $b\geq 4$. In general we write $\overline{u(i)u(j)}^b$ to specify the base in which we concatenate.
    
    In this article, we study concatenations of the sequence $\left(u(n)\right)_{n\in\NN}$ $ \coloneqq \left(u(0)+d\,n\right)_{n\in\NN}$, with $d\in\NN\setminus \{0\}$, in an arbitrary number base $b\in\NN, b\geq 2$. The most natural of such sequences is the (ordered) set of positive integers $\NN\setminus \{0\}$ in the decimal base with $u(0)=d=1$. For that sequence, one considers $\left(\sm(n)\right)_{n\in\NN} \coloneqq \left(\overline{12\cdots(n+1)}^{10}\right)_n$, $\left(\smr(n)\right)_{n\in\NN}=\left(\overline{(n+1)n\cdots1}^{10}\right)_n$, and $\left(\smx(n)\right)_{n\in\NN}$, $\smx(0)=1, \smx(n)=\overline{\sm(n)\smr(n-1)}^{10}, n\geq 1,$ where $\overline{(.)}^b$ denotes the concatenation in base $b$. Some authors \cite{SmF2,SmF1,torres2004smarandache} attribute $\left(\sm(n)\right)_{n\in\NN}$ to Smarandache. We adopt this appellation to single out the particular case of positive integers in the decimal base. It is worth mentioning the connection with the Champernowne constant $0.\overline{123\ldots891011\ldots}^{10}$ whose integer in its fractional part is the limit of $(\sm(n))_n$, also known as the Champernowne word. One motivation of this paper is that no term $\sm(n)$ is known to be prime for $n\leq 10^6$ \cite{mers1}. There is an ongoing Sieve computation on the mersenneforum.org at \cite{mers2} for $n\leq 10^{15}$. For more details about this sequence, see \href{https://oeis.org/A007908}{A007908} from \cite{Oeis}.

    Using Pad\'e approximants \cite{gfun,yurkevich2022art} with the $9$ first coefficients of the generating series
    \begin{equation}\label{eq:pade1}
        F(x)\coloneqq \sum_{n=0}^{\infty}\sm(n)\,x^n = 1 + 12\,x+\cdots+1234567\,x^8 + O(x^9),
    \end{equation}
   one finds the rational function
   \begin{equation}\label{eq:form1}
       \frac{1}{(10\,x-1)(x-1)^2} = \moverset{\infty}{\munderset{n =0}{\textcolor{gray}{\sum}}}\! \left(\frac{100 \,10^{n}}{81}-\frac{n}{9}-\frac{19}{81}\right) x^{n},
   \end{equation}
    which approximates $F(x)$ accurately up to order $9$, covering thus all $1$-digit concatenations in $\left(\sm(n)\right)_{n\in\NN}$. The last equality in \eqref{eq:form1} is obtained automatically with the algorithms from \cite{BTphd,BTmc2020,teguia2021symbolic}. However, in this case, one can perform these computations by hand or with classical methods. We investigate the intermediate steps of that algorithm to derive formulae for arbitrary concatenations of $\left(u(n)\right)_n$ in base $b$. 

    Throughout this paper, {\it right-concatenation} of $(u(n))_n$ refers to the sequence of general term $s(n)=\overline{u(0)u(1)\cdots u(n) }^b$, formed by concatenating the first $n+1$ terms of $\left(u(n)\right)_n$ in base $b$ from the right. Similarly, {\it left-concatenation} of $(u(n))_n$ is associated to the sequence of general term $s_{\g}(n) = \overline{u(n)u(n-1)\cdots u(0)}^b$ ({\tiny $g$} for left in French). {\it The palindromic} concatenation of $(u(n))_n$ defines $\left(s_*(n)\right)_{n\in\NN}$, given by $s_*(0)=u(0)$, $s_*(n)=\overline{s(n)s_{\g}(n-1)}^b, n\geq 1$. Depending on the context, `right,' `left,' and `palindromic' may be omitted. In this paper, we give precise answers to the following questions:
 	\begin{enumerate}
 		\item Given a positive integer $l$, what are the recurrence equations for $l$-digit concatenations?
 		\item How do the solutions of these equations relate to $l$-digit concatenations?
 		\item Does this lead to general formulae for $s(n),s_{\g}(n),$ and $\sx(n)$, $n\in\NN$?
            \item Do these concatenations obey a linear recurrence with polynomial coefficients in $n$? 
 	\end{enumerate}
 	
     Note that except for the initial version of this paper \cite{teguia2022explicit}, and the independent work from \cite[Section 4]{alekseyev2020three}, which deals with right-concatenations, we are not aware of any other work that studies these sequences. Other references, such as \cite{torres2004smarandache}, essentially explore these sequences with naive algorithms.
 	
 	In \Cref{sec2}, we establish recurrence equations encoding fixed-length concatenations. In \Cref{sec3}, we solve those recurrence equations and deduce formulae associated with fixed-length concatenations. We will then deduce explicit formulae to compute $s(n), s_{\g}(n),$ and $s_*(n)$. \Cref{sec4} is devoted to the proof that our concatenating sequences do not obey linear recurrence equations with polynomial coefficients. Our first proof shows this as a general fact about sequences with linearly independent holonomic representations in infinitely many integer intervals. Our second proof arises from divisibility criteria deduced from asymptotic terms of these sequences. \Cref{sec5} presents an algorithm to compute $\sm(n)$, from which the one of $\smr(n)$ can be derived easily. We also present some computations with our implementation in the Maple computer algebra system \cite{maple}.

	\section{Recurrence equations}\label{sec2}
 
	A holonomic recurrence equation (RE) is a linear homogeneous recurrence equation with polynomial coefficients in the index variable. Although all the recurrence equations in this section have constant coefficients (C-finite), in \Cref{sec4}, we will see that only one is of minimal order. The other two equations can be reduced to second-order holonomic REs with polynomial coefficients of degree $1$. These observations will serve as a premise for our forthcoming proof in \Cref{sec4}, which shows that `{\it different holonomicity on an infinite sequence of range implies non-holonomicity}'. Also, the solver used in \Cref{sec3} effectively applies to holonomic recurrence equations. As observed in the introduction, from the use of the symbolic-computation algorithm from \cite{teguia2021symbolic}, the formula to compute Smarandache numbers for $1$-digit right-concatenations is obtained from solving a holonomic RE. 
 
    In this section, we use the indeterminate term $a(n)$ for arbitrary holonomic REs. To start, let us proceed as the guess-and-prove paradigm (see \cite{polya2004solve}) would suggest by observing (guessing) $l$-digit right-concatenations of natural numbers for small $l$; that is, observing recurrence relations in $(\sm(n))_n$ for $l=1,2,3$. Using the \texttt{GFUN} package \cite{gfun} (procedure \texttt{listtorec}) with enough initial coefficients, we observe the following:
	\begin{enumerate}
		\item A holonomic RE for $1$-digit right-concatenations is given by
		\begin{equation}\label{9}
			a(n+3) - 12 \cdot a(n+2) + 21 \cdot a(n+1) - 10 \cdot a(n) =0.
		\end{equation}
		
		\item For $2$-digit right-concatenations we get
		\begin{equation}\label{10}
			a(n+3) - 102 \cdot a(n+2) + 201 \cdot a(n+1) - 100 \cdot a(n) =0.
		\end{equation}
		
		\item For $3$-digit right-concatenations we have
		\begin{equation}\label{11}
			a(n+3) - 1002 \cdot a(n+2) + 2001 \cdot a(n+1) - 1000 \cdot a(n) =0.
		\end{equation}
	\end{enumerate}

	In general, following this observation, we conjecture and prove that $l$-digit concatenations satisfy linear recurrences with constant coefficients that depend on $b$ and $l$.
	
	\begin{lemma}\label{lem1} Let $l$ be a positive integer. The terms of $l$-digit concatenations in $(\sm(n))_n$ satisfy the recurrence equation
		\begin{equation}\label{12}
			a(n+3) - (10^l+2) \cdot a(n+2) + (2\cdot 10^l + 1) \cdot a(n+1) - 10^l \cdot a(n) =0.
		\end{equation}
	\end{lemma}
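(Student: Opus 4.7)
The plan is to exploit the elementary fact that concatenation on the right shifts the base-$10$ representation and then kill the inhomogeneous part by finite differences. Let $a(n)$ denote the $n$-th term of an $l$-digit right-concatenation, meaning the terms being concatenated come from an arithmetic progression $(c_n)_{n\geqslant 0}$ of $l$-digit integers with common difference $1$. The starting point is the identity
\[
a(n+1) = 10^{l}\, a(n) + c_{n+1},
\]
which simply expresses that appending an $l$-digit block to $a(n)$ shifts its decimal expansion $l$ positions to the left and adds the new $l$-digit integer. Since $(c_n)$ is affine in $n$, this is a first-order linear recurrence with an inhomogeneity of degree~$1$.

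Next, I would phrase this in operator form. Writing $E$ for the forward-shift operator $Ea(n)=a(n+1)$, the identity above reads
\[
(E - 10^{l})\,a(n) = c_{n+1} = c_{0} + (n+1).
\]
Because $c_{n+1}$ is a polynomial of degree~$1$ in $n$, it is annihilated by the operator $(E-1)^{2}$. Composing and applying to $a(n)$ then yields the purely homogeneous third-order equation
\[
(E-1)^{2}(E - 10^{l})\,a(n) = 0.
\]

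Finally, I would expand the operator product:
\[
(E-1)^{2}(E - 10^{l}) = E^{3} - (10^{l}+2)E^{2} + (2\cdot 10^{l}+1)E - 10^{l},
\]
which is exactly equation~\eqref{12}. Evaluating this on $a(n)$ produces the desired recurrence, so the lemma follows at once.

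There is no serious obstacle: the argument is a direct application of the shift-operator technique for linear recurrences with polynomial inhomogeneity. The only point that requires care, and that I would flag in the write-up, is the range of validity: the derivation assumes every concatenated block has exactly $l$ digits, so the recurrence holds as long as the underlying arithmetic progression stays strictly below $10^{l}$. The recurrence therefore governs any fixed-length portion of the Smarandache concatenation separately, and the formula for $Sm(n)$ itself will have to be patched together across successive digit-length ranges in the next section.
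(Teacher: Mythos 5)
Your proof is correct and is in substance the same as the paper's: the paper starts from the same concatenation identity $Sm(n+1)=10^{l}Sm(n)+N$ and eliminates $N$ and the constant difference by explicit substitution among three consecutive terms, which is precisely your application of the annihilator $(E-1)^{2}$ to the affine inhomogeneity. The operator phrasing is just a tidier packaging of that elimination, and your caveat about the range of indices where all blocks have exactly $l$ digits matches the paper's assumption $10^{l-1}-1\leqslant n\leqslant 10^{l}-2$.
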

	\begin{proof}
		Assume $10^{l-1}-1\leqslant n \leqslant 10^{l}-2$. Let $N$ be the number occupying the last $l$ digits of $\sm(n+1)$. We have the relationships
		\begin{eqnarray}
			\sm(n+1) &=& 10^l \cdot \sm(n) + N \label{13},\\
			\sm(n+2) &=& 10^l \cdot \sm(n+1) + N+1 \label{14},\\
			\sm(n+3) &=& 10^l \cdot \sm(n+2) + N+2 \label{15}.
		\end{eqnarray}
	 From \eqref{13} we get $N=\sm(n+1) - 10^l \sm(n)$, and from \eqref{14}, $1=\sm(n+2)-10^l \sm(n+1) - N$. Therefore, we can write
	 \begin{equation}\label{16}
	 	2 \times 1 = 2 \cdot \left(\sm(n+2) - (10^l+1) \cdot \sm(n+1) + 10^l \cdot \sm(n)\right).
	 \end{equation}
 	Finally, we substitute $2$ in \eqref{15} by the right-hand side of \eqref{16} and obtain that
 	\begin{equation}\label{17}
 		\sm(n+3) = (10^l+2) \cdot \sm(n+2) - (2\cdot 10^l + 1) \cdot \sm(n+1) + 10^l \cdot \sm(n),
 	\end{equation}
 	which shows that terms of $l$-digit concatenations in $(\sm(n))_n$ satisfy \eqref{12}. \CQFD
	\end{proof}

	The above proof can be used as a template to prove that all terms corresponding to a fixed-length concatenation in  $(s(n))_{n}$ satisfy a C-finite recurrence equation whose coefficients depend on the length of the concatenation. The following lemma gives the recurrence equations for right-concatenations and left-concatenations of an arithmetic progression. We ignore the case where the common difference disqualifies $l$-digit concatenations. E.g., in the decimal base, there is no $1$-digit concatenation with $10$ as the common difference. We assume that the $l$-digit concatenations involve at least $4$ terms in $(s(n))_n$, c.f. remark \ref{rmk1}.
	
	\begin{lemma}\label{lem2} Let $l$ be a positive integer and $b\geq 5$ a natural number base. 
	\begin{itemize}
		\item[i.] The terms of $l$-digit concatenations in $(s(n))_n$ satisfy the recurrence equation
        \begin{equation}\label{eq:eq12}
            a(n+3) - (b^l+2) \cdot a(n+2) + (2\cdot b^l + 1) \cdot a(n+1) - b^l \cdot a(n) =0.
        \end{equation}
		\item[ii.] The recurrence equation for $l$-digit concatenations in $(s_{\g}(n))_n)$ is
		\begin{equation}\label{18}
			a(n+3) - (2 \cdot b^l + 1) \cdot a(n+2) + (b^{2l}+2\cdot b^l) \cdot a(n+1) - b^{2l} \cdot a(n) = 0.
		\end{equation}
	\end{itemize}
	\end{lemma}
 
  \begin{remark}\label{rmk1}
     We note that when $b\leq 4$, for the above recurrences to hold, we need $l\geq 3$ if $b=2$, and $l\geq 2$ if $b\in\{3,4\}$.
 \end{remark}
 
	\begin{proof} 
	For \textit{i}, it suffices to observe that \eqref{14} becomes $s(n+2)=b^l s(n+1) + N + d$, where $d$ is the common difference of $(u(n))_n$. The proof is then straightforward by substituting $d$ and $2 d$ similarly as we did for $1$ and $2$ in the proof of \Cref{lem1}.
		
	For \textit{ii}, we know that $s_{\g}(0)=u(0)$. Suppose $b^{l-1}-u(0)\leq n\cdot d \leq b^{l}-u(0)$. Let $N$ be the first $l$ digit of $s_{\g}(n+1)$, and $\nu$ the digit length of $s_{\g}(n)$. We have the relationships
	\begin{eqnarray}
		s_{\g}(n+1) &=&  N \cdot b^{\nu} + s_{\g}(n) \label{19},\\
		s_{\g}(n+2) &=& (N+d) \cdot b^{l + \nu} + s_{\g}(n+1) = b^l \left(N\,b^{\nu}\right) + d\,b^{l +\nu} + s_{\g}(n+1) \label{20},\\
		s_{\g}(n+3) &=& (N+2 d) \cdot b^{2\,l + \nu} + s_{\g}(n+2) \label{21}.
	\end{eqnarray}
	From \eqref{19}, we get $N\,b^{\nu} = s_{\g}(n+1)-s_{\g}(n)$, and from \eqref{20}, $d\, b^{l + \nu} = s_{\g}(n+2)-b^l \cdot (N\,b^{\nu})-s_{\g}(n+1)$. Therefore
	\begin{equation}\label{22}
	2 \cdot d  \cdot b^{2\,l+\nu} = 2 \cdot b^l \left(s_{\g}(n+2)-(b^l+1) \cdot s_{\g}(n+1)+b^l \cdot s_{\g}(n)\right).
	\end{equation}
	Finally, by substitution in \eqref{21} we get
	\begin{eqnarray}
		s_{\g}(n+3) &=& b^{2l} (s_{\g}(n+1)-s_{\g}(n))\nonumber\\
		       &\phantom{=}& + 2 \cdot b^l\left(s_{\g}(n+2)-(b^l+1) s_{\g}(n+1)+b^l s_{\g}(n)\right) + s_{\g}(n+2) \nonumber\\
			   &=& (2 \cdot b^l + 1) \cdot s_{\g}(n+2) - (b^{2l}+2 \cdot b^l) \cdot s_{\g}(n+1) + b^{2l} \cdot s_{\g}(n) \label{23},
	\end{eqnarray}
	which concludes the proof. \CQFD
	\end{proof}

	We can similarly define recurrence equations for the palindromic concatenation, which corresponds to $(\sx(n))_n$. A typical example in the decimal case is $(\smx(n))_n$, for which the $10$th term is prime.
	
	\begin{lemma}\label{lem3} Let $l$ and $b\geq 5$ be positive integers. The terms of $l$-digit concatenations in $(\sx(n))_n$ satisfy the recurrence equation
		\begin{equation}\label{24}
			a(n+3) - \left(1+b^l+b^{2l}\right) \cdot a(n+2) +  \left(b^l+b^{2l}+b^{3l}\right)\cdot a(n+1) - b^{3l} \cdot a(n) = 0.
		\end{equation}
	\end{lemma}
	A technique to prove Lemma \ref{lem3} easily followed from the proofs of Lemma \ref{lem2}.
	
	The recurrence equations in \Cref{lem2} and \Cref{lem3} encode all fixed-length concatenations of our sequences $(s(n))_n,$ $(s_{\g}(n))_n$, and $(\sx(n))_n$. Of course, several other concatenations can similarly be considered. In particular, one can also construct recurrence equations for sequences like $\left(\overline{123\ldots n(n+1)(n+1)n\ldots321}^{10}\right)_n$, $n\geq0$: $11$, $1221$, $123321$, $\ldots$, for which the $10$th term is also a prime number. We understand that all these other cases can be addressed with a similar reasoning. In this paper, we only concentrate on the concatenations of \Cref{lem2} and \Cref{lem3}.
	
	\section{Formulae for concatenations}\label{sec3}
	
	All recurrence equations of the previous section are third-order linear recurrences. Therefore, each of them has three linearly independent solutions. Although all these recurrences might be solve using classical techniques for linear recurrence with constant coefficients, we solve them by using the algorithm in \cite{BThyper}, which implements a variant of van Hoeij's algorithm (see \cite{petkovvsek1992hypergeometric,van1999finite}). This choice is due to the presence of the parameters $b,l,$ in the equations which are unlikely to be considered with existing constant-coefficient linear recurrence solvers. The formulae of our concatenating sequences are deduced as linear combinations of terms in the bases of computed solutions.
	
	As usual, we start with the $l$-digit right-concatenation which includes $(\sm(n))_{n}$. For $n_1,n_2\in~\NN, n_1\leq n_2$, we denote by $\llbracket n_1, n_2\rrbracket$ the set of integers $\left\lbrace n_1,n_1+1,\ldots,n_2\right\rbrace$.
	
	\begin{theorem}\label{theo1} The general term of $(s(n))_n,\, b\geq 5,$ can be computed as follows: 
    \begin{align}
			&s(n) = \alpha_l + \mu_l (n-t_l) + \theta_l\,b^{l(n-t_l)}, \label{25}\\
			&l = \lceil \log_{b}(n\,d + s(0) + 1)\rceil,~ t_l = \left\lceil\frac{b^{l-1}-s(0)}{d}\right\rceil, \label{26}\\
			&\alpha_l = - \frac{\left(b^l-1\right)\cdot u(t_l) + d\cdot b^l}{\left(b^l-1\right)^2}, \label{27}\\
			&\mu_l =-\frac{d}{b^l-1},\label{28}\\
			&\theta_l = \frac{\kappa_2-2 \cdot \kappa_1 + \kappa_0}{\left(b^l-1\right)^2}, \label{29}\\
			&\kappa_0=s(t_l),~ \kappa_1=s(t_l+1),~ \kappa_2=s(t_l+2).\label{30}
	\end{align}
	\end{theorem}
	\begin{proof}
		The recurrence equation for $l$-digit right-concatenation is (see Lemma \ref{lem2})
		\begin{equation*}
			a(n+3) - (2\cdot b^l + 1)\cdot a(n+2) + (b^{2l}+2\cdot b^l)\cdot a(n+1) - b^{2l}\cdot a(n) = 0.
		\end{equation*}
	    Using the algorithm in \cite{BThyper}, we find the basis of solutions
	    \begin{equation}\label{31}
	    	\left\{1,~ n,~ b^{l\,n} \right\},
	    \end{equation}
    	which can be easily verified. Therefore there exist constants $\alpha_l, \mu_l, \theta_l$ to compute terms of $l$-digit concatenations in $(s(n))_{n}$ as follows:
    	\begin{equation}\label{32}
    		 s(n)=\alpha_l + \mu_l\,n + \theta_l\,b^{ln}.
    	\end{equation}
    	The constants $\alpha_l, \mu_l, \theta_l$ can be computed by solving the linear system
    	\begin{equation}\label{33}
    		\begin{cases}
    			\alpha_l + \theta_l = \kappa_0\\
    			\alpha_l + \mu_l + \theta_l b^l = \kappa_1\\
    			\alpha_l + 2 \mu_l + \theta_l b^{2l} = \kappa_2
    		\end{cases},
    	\end{equation}
    	where $\kappa_0, \kappa_1,$ and $\kappa_2$ correspond to the first three $l$-digit concatenations in $(s(n))_{n}$; these are, respectively, $s(t_l)$, $s(t_l+1)$, and $s(t_l+2)$, where $t_l=\left\lceil\left(b^{l-1}-s(0)\right)/d\right\rceil$, and $l=\lceil \log_{b}(n\,d + s(0) + 1)\rceil$. Solving \eqref{33} yields 
    	\begin{equation}\label{34}
    		\alpha_l = \frac{2\cdot \left(\kappa_1-b^l\cdot \kappa_0\right) - \left(\kappa_2-b^{2l}\cdot \kappa_0\right)}{\left(b^l-1\right)^2},~ \mu_l =\frac{\kappa_2-b^l\cdot \kappa_1 - \left(\kappa_1 - b^l\cdot \kappa_0\right)}{b^l-1},
    	\end{equation}
    	and $\theta_l$ as in \eqref{29}. The coefficients $\alpha_l$ and $\mu_l$ can be further simplified by using properties of the arithmetic progression $(u(n))_{n}$. It is easy to see that
    \begin{align}
    &s(t_l+1) - b^l\cdot s(t_l) = u(t_l) + d, \label{35}\\
    &s(t_l+2) - b^{2l}\cdot s(t_l) = \left(u(t_l)+d\right)\cdot b^l + u(t_l)+2\cdot d,\label{36}\\
    &s(t_l+2) - b^l\cdot s(t_l+1) = u(t_l)+2\cdot d. \label{37}
    \end{align}
    	After substitution in \eqref{34} we find $\alpha_l$ and $\theta_l$ as expected.
    	
    	Finally, to use \eqref{32} as the formula to compute $s(n)$ for all non-negative integers $n$, we shift the index variable $n$ in the range $\llbracket 0, t_{l+1}-t_l-1\rrbracket$ by substituting $n$ by $n-t_l$. \CQFD
	\end{proof}
	
	From \Cref{theo1}, we see that an efficient computation of $l$-digit concatenations need only compute $\alpha_l$, $\mu_l$, and $\theta_l$ once. For example, for $(\sm(n))_{n}$, this yields effective formulae for indices in ranges like $\llbracket 10^6-1, 10^7-2\rrbracket$, $\llbracket 10^7-1, 10^8-2\rrbracket$, etc.
	
	We mention that $\theta_l$ in \Cref{theo1} can be written in terms of $\kappa_0$, $u(t_l)$, and $d$. Nevertheless, it seems more efficient to compute $\theta_l$ with $\kappa_0$, $\kappa_1$ and $\kappa_2$, which are respectively deduced by the right-concatenation of $u(t_l)$ to $s(t_l-1)$, $u(t_l)+d$ to $\kappa_0$, and $u(t_l)+2d$ to $\kappa_1$. The formula for Smarandache numbers is a direct consequence of \Cref{theo1}.

	\begin{corollary}[Formula for Smarandache numbers]\label{cor1} The general term of $(\sm(n))_n$ can be computed as follows:
 \begin{equation}
     \sm(n) = \alpha_l + \mu_l (n-t_l) + \theta_l 10^{l(n-t_l)}, \label{38}
 \end{equation}
where 
\[l = \lceil \log_{10}\left(n + 2\right)\rceil,~ t_l = 10^{l-1}-1, \alpha_l = -\frac{10^{2 l - 1} + 9 \cdot 10^{l-1}}{(10^l-1)^2},\]
\[\mu_l = -\frac{1}{10^l-1}, \theta_l = \frac{\kappa_2-2\cdot \kappa_1 + \kappa_0}{(10^l-1)^2}, \kappa_0=\sm(t_l),~ \kappa_1=\sm(t_l+1),~ \kappa_2=\sm(t_l+2).\]
\end{corollary}

Let us now give the formula for left-concatenations encoded by $(s_{\g}(n))_n$.
	
\begin{theorem}\label{theo2} The general term of $(s_{\g}(n))_n,\, b\geq5$, can be computed as follows:
\begin{align}
    &s_{\g}(n) = \alpha_l + \mu_l \cdot b^{l(n-t_l)} + \theta_l\cdot (n-t_l)\cdot b^{l(n-t_l)} \label{44}\\
    &l = \lceil \log_{b}(n\,d + s_{\g}(0) + 1)\rceil,~ t_l = \left\lceil\frac{b^{l-1}-s_{\g}(0)}{d}\right\rceil,~ \nu_l\equiv \textnormal{ digit length of } s_{\g}(t_l), \label{45}\\
    &\alpha_l = \frac{\kappa_2-2\cdot b^l\cdot \kappa_1 + b^{2l}\cdot \kappa_0}{(b^l-1)^2},\, \mu_l =\frac{\left(\left(b^l-1\right)\cdot u(t_l) - d\right)\cdot b^{\nu_l}}{\left(b^l-1\right)^2},\, \theta_l = \frac{d\cdot b^{\nu_l}}{b^l-1}, \label{48}\\
    &\kappa_0=s_{\g}(t_l),~ \kappa_1=s_{\g}(t_l+1),~ \kappa_2=s_{\g}(t_l+2). \label{49}
\end{align}
\end{theorem}
	\begin{proof} We solve the corresponding recurrence equation
		\begin{equation*}
			a(n+3) - (2\cdot b^l + 1)\cdot a(n+2) + (b^{2l}+2\cdot b^l)\cdot a(n+1) - b^{2l}\cdot a(n) = 0,
		\end{equation*}
	and get the basis of solutions
	\begin{equation}\label{50}
		\left\{1,~ b^{l\cdot n},~ n\cdot b^{l\,n}\right\}.
	\end{equation}
	We proceed as in the proof of \Cref{theo1} to find the expected formulae. After solving the linear system of initial conditions, the following is needed to simplify the coefficients $\mu_l$ and $\theta_l$.
	\begin{align}
	&s_{\g}(t_l+2) - s_{\g}(t_l+1) = b^{\nu_l+l}\cdot \left(u(t_l)+2\cdot d\right), \label{51}\\
	&s_{\g}(t_l+1)-s_{\g}(t_l) = b^{\nu_l}\cdot \left(u(t_l)+d\right), \label{52}\\
	&s_{\g}(t_l+2)-s_{\g}(t_l) = b^{\nu_l}\cdot \left(\left(u(t_l)+2\cdot d \right)\cdot b^l + u(t_l)+d\right). \label{53}
	\end{align}
 \CQFD
\end{proof}
Hence, we deduce the formulae for reverse Smarandache numbers.
\begin{corollary}[Formula for reverse Smarandache numbers]\label{cor2} The general term of $(\smr(n))_n$ can be computed as follows:
\begin{align}
&\smr(n) = \alpha_l + \mu_l \cdot 10^{l(n-t_l)} + \theta_l\cdot (n-t_l)\cdot 10^{l(n-t_l)}, \label{54}\\
&l = \lceil \log_{10}\left(n + 2\right)\rceil,~ t_l = 10^{l-1}-1,~  \nu_l=10^{l-1}\cdot\left(l-\frac{10}{9}\right)+l+\frac{1}{9},\label{55}\\
&\alpha_l = \frac{\kappa_2-2\cdot 10^l\cdot \kappa_1 + 10^{2l}\cdot \kappa_0}{(10^l-1)^2}, \mu_l = \frac{10^{\nu_l}\cdot \left(10^{2l-1}-10^{l-1}-1\right)}{\left(10^l-1\right)^2}, \label{56}\\
&\theta_l = \frac{10^{\nu_l}}{10^l-1}, \kappa_0=\smr(t_l),~ \kappa_1=\smr(t_l+1),~ \kappa_2=\smr(t_l+2). \label{57}
\end{align}
\end{corollary}
\begin{proof} Immediate application of \Cref{theo2}. The formula for $\nu_l$ is deduced from the sum
\begin{equation}\label{60}
1+\sum_{k=1}^{l-1}\left(10^k-1-10^{k-1}\right)\cdot k + k+1 = 10^{l-1}\cdot\left(l-\frac{10}{9}\right)+l+\frac{1}{9},
\end{equation}
where $\left(10^k-1-10^{k-1}\right)\cdot k$ counts all the digits of the $k$-digit left-concatenation, and $k+1$ stands for the first $(k+1)$-digit left-concatenation. The extra $1$ before the sum compensates the $1$-digit concatenations as the sequence starts at index $0$. \CQFD
\end{proof}
We end this section with the formula for the palindromic concatenations encoded by $(\sx(n))_n$, which we give without proof.
	
\begin{theorem}\label{theo3} The general term of $(\sx(n))_n,\, b\geq 5$ can be computed as follows:
\begin{align}
&\sx(n) = \alpha_l + \mu_l \cdot b^{l(n-t_l)} + \theta_l\cdot b^{2l(n-t_l)}, \label{61}\\
&l = \lceil \log_{b}(n\,d + \sx(0) + 1)\rceil,~ t_l = \left\lceil\frac{b^{l-1}-\sx(0)}{d}\right\rceil,\, \alpha_l = \frac{b^{3l}\cdot \kappa_0 - b^{l}\cdot \left(b^l + 1\right)\cdot \kappa_1 + \cdot \kappa_2}{\left(b^l+1\right) \cdot \left(b^l-1\right)^2}, \label{63}\\
&\mu_l = - \frac{b^{2l}\cdot \kappa_0 - \left(b^{2l}+1\right)\cdot \kappa_1 + \kappa_2}{b^l\cdot \left(b^l-1\right)^2},\, \theta_l = \frac{b^l\cdot \kappa_0 - \left(b^l+1\right)\cdot \kappa_1 + \kappa_2}{b^l\cdot \left(b^l+1\right) \cdot \left(b^l-1\right)^2}, \label{65}\\
&\kappa_0=\sx(t_l),~ \kappa_1=\sx(t_l+1),~ \kappa_2=\sx(t_l+2). \label{66}
\end{align}
\end{theorem}

\section{Non-holonomicity}\label{sec4}

Flajolet, Gerhold, and Salvy \cite{flajolet2005non} proposed an important strategy to prove that a sequence is not holonomic. Their method arises from the behavior of the corresponding generating functions around their singularities. Indeed, the possible growth of a D-finite function $f(x)\coloneqq \sum_{n=n_0}^{\infty} a(n)\,x^n$ (see \cite{stanley1980differentiably}) is highly constrained near any of its singularities. By \textit{Abelian theorems} (see \cite{Regvar}, \cite[Theorem 3]{flajolet2005non}), these constraints are transferred to the asymptotic behavior of the power series coefficients $a(n)$ at infinity, excluding thus all sequences that do not satisfy these constraints. As a result, Flajolet, Gerhold, and Salvy wrote, `{\it almost anything is not holonomic unless it is holonomic by design}'. Another asymptotic approach is given in \cite{bell2008non} for sequences $a(n)=\left.f(x)\right|_{x=n}$, where $f$ is an explicitly known function. We mention that Gerhold initially proved that fractional powers of hypergeometric sequences are not holonomic by using facts about algebraic extensions \cite{gerhold2004some}. In this section, we also provide proofs not directly related to the singularity analysis of the generating functions. Our first proof relies on basic facts from difference algebra in the principal ideal domain of univariate shift operators. The second proof is deduced from arithmetic relations in these sequences at large indices.

\subsection{Proof from shift algebra}\label{subsec:mainproof}

Let $\KK \supset \QQ$ be a field of characteristic zero. We consider the ring of linear operators $R_{\sigma}\coloneqq \KK(n)\langle \sigma \rangle$, where $\sigma$, denoting the shift operator, acts in the following manner
\[\sigma\cdot f(n) = f(n+1)\sigma,\, \forall\, f\in \overline{\KK(n)}.\]
The field $\KK(n)$ is seen as the difference field $(\KK(n),\sigma)$, and $\overline{\KK(n)}$ denotes its closure. Any operator $p\in\rs$ has the form
\[p=p_0+p_1\sigma+\cdots+p_r\sigma^r,\]
where $r=\ord(p)$ is the order of $p$. The action $p\cdot a(n)$, of the operator $p$ on a sequence general term $a(n)$ is the linear combination
\[p_0(n)\,a(n)+p_1(n)\,a(n+1)+\cdots+p_r(n)\,a(n+r).\]
In this setting, we say that a sequence $(a(n))_{n\in\NN}$ is holonomic if there exists $p\in \rs$ such that
\begin{equation}\label{eq:pan0}
    p\cdot a(n) = 0,\, \forall n\in\NN;
\end{equation}
in which case, $p$ is called an annihilator of $(a(n))_{n\in\NN}$. The sequence $(a(n))_{n\in\NN}$ is thus uniquely defined with $p$ and $\max(r,N+1)$ initial values, where $N$ is the maximum integer root of the leading polynomial coefficient $p_r$. We often identify holonomic sequences with an annihilating operator and their initial values. However, to simplify the text, we usually omit mentioning the initial values when considering operators for sequences.

We also write $(a(n))_{n\in\NN}\in \sol(p)$, where $\sol(p)$ is the set of solutions to the $P$-recursive equation encoded by $p$. We denote the corresponding vector space by $\langle \sol(p) \rangle$. For further details about univariate linear difference operators,  see, for instance, \cite{bronstein1996introduction,ore1933theory}. We will use the following well-known facts:
\begin{enumerate}
    \item $\forall p\in \rs,$ $\dim\left(\langle \sol(p) \rangle\right)<\infty$.
    \item $\forall p,q\in \rs$, the ideal generated by $p$ and $q$ is such that $\langle p, q\rangle = \langle \gcd(p,q)\rangle,$ where $\gcd$ denotes the \textit{greatest common right divisor} of $p$ and $q$. Thus, every ideal is principal. Moreover $\sol(\gcd(p,q))=\sol(p)\cap \sol(q).$ 
\end{enumerate}

Note that for any $p\in\rs$, the encoded equation is equivalent to the one obtained after clearing the denominators. For this reason, we always assume that our operators have polynomial coefficients. It follows from the above facts that the ideal generated by the annihilators of a holonomic sequence is generated by a single operator whose order is minimal. 

We use the following definition as an essential tool to regard holonomic equations and their initial values as closed forms without necessarily constructing formulae. 

\begin{definition}[Germ of a holonomic sequence (see Example 1.3 in \cite{vansinger1997galois})] The germ of a holonomic sequence $(a(n))_{n\in\NN}$ or an operator $p\in\rs$ that annihilates it is the sequence $(a(n+N))_{n\in\NN}$ or the operator $\sigma^N \cdot p$, where $N$ is an integer greater than all integers where the polynomial coefficients in $p$ have some observed local behavior.
\end{definition}

Two holonomic sequences have the same germ if their difference has finite support \cite{van1999finite}. For operators, this means that one is a shift of the other. Notice that the shifting by $N$ is equivalent to keeping the same operator or sequence, but starting from index $N$.

Our interest in considering germs of holonomic sequences relies on the fact that they represent the generic solutions of holonomic equations. In the following example, we highlight a difference between what we may call {\it holonomic by closed forms} and {\it holonomic by values}, which is central to our reasoning on integer intervals. This serves as a preview for the coming definition, which helps formalise the concept of being holonomic per range and understand what the proof of the guessed equations represents.

\begin{example}[Holonomic by values VS holonomic by closed form (or design)]\label{eg:hvVhc} Let $p$ as in \eqref{eq:pan0}, $I\subset \NN$ a range, and suppose the sequence $(a(n))_{n\in\NN}$ satisfies the following relation ``by design''.
\begin{equation}\label{eq:egeq1}
    p\cdot a(n)=0,\,\, \text{for all }n\in I.
\end{equation}
What we mean by design is that $(a(n))_{n\in\NN}$ satisfies \eqref{eq:egeq1} as the germ of the operator $p$ does, meaning that the relation is not caused by local properties of $p$ in $I$ but the behavior of $(a(n))_{n\in\NN}$. This is what we understand as closed form in the interval $I$. It does not have to be necessarily an explicit formula.

Let now $p_{r+1}$ be a polynomial such that $p_{r+1}(n)=0$, for all $n\in I$. Then the operator $q=~p_{r+1}\sigma^{r+1}+p$ also ``cancels'' $(a(n))_{n\in\NN}$ on $I.$ This is what we regard as ``artificial holonomicity'' or holonomicity by values, because the polynomial $p_{r+1}$ is not constructed from a ``genuine'' behavior of $(a(n))_{n\in \NN}$. The key difference is that the germs of $p$ and $q$ are completely different. One can construct artificial operators like $q$ for any non-polynomial holonomic sequence by using polynomial interpolation on the given range. This would yield a first-order operator where the coefficients are the interpolating polynomial and its first shift.
\end{example}

To be more formal and make this distinction more precise, we introduce a new type of holonomic representation that generalises the classical one.

\begin{definition}[Holonomicity per range (or integer interval)]\label{def:holperrange} Let $(a(n))_{n\in\NN}$ be a sequence, and $(I_j)_{j\in S},$  be a partition of $\NN$ in ranges such that $S\coloneqq \{0,1,2,\ldots,M\}, M\in\NN$, $\max\{I_j\}+1=\min\{I_{j+1}\}$, with $\min\{I_0\}=0.$ We say that $(a(n))_{n\in\NN}$ is holonomic per range if there exists a family of holonomic sequences $\left((a^{(j)}(n))_{n\in\NN}\right)_{j\in S}$ such that
\begin{equation}\label{eq:holperrange}
 a(n)=a^{(j)}(n),\,\, \forall n\in I_j,\,\, \text{and}
\end{equation}
the germ of $(a^{(j)}(n))_{n\in\NN}$ satisfies the same holonomic equation that $(a(n))_{n\in\NN}$ satisfies in $I_j$.
\end{definition}
We denote a partition with the properties of $(I_j)_{j\in S}$ in \Cref{def:holperrange}, a {\it natural partition of $\NN$} or simply a natural partition.

The statement ``the germ of $(a^{(j)}(n))_{n\in\NN}$ satisfies the same holonomic equation that $(a(n))_{n\in\NN}$ satisfies in $I_j$'' is equivalent to ``in $I_j$, $a(n)$ has the same general formula that the germ of $a^{(j)}(n)$ has for large integers.'' In other words, the identity $a(n)=a^{(j)}(n)$ is not restricted to local properties of the annihilator of $(a^{(j)}(n))_{n\in\NN}$ in $I_j$. This excludes all artificial operators that are over-fitted to the ranges.

\begin{example} \item 
\begin{itemize}
    \item The sequences $(s(n))_{n}, (s_{\g}(n))_n,$ and $(s_*(n))_n$ are holonomic per range. This is a direct consequence of our proof that the guessed recurrence equations are satisfied by the design of these sequences.
    \item Every holonomic sequence $(a(n))_{n\in\NN}$ is holonomic per range. A direct way is to use the definition of $(a(n))_{n\in \NN}$ with its annihilator and initial values, and the natural partition as the trivial partition that only contains $\NN$. The converse is not true, and we are going to demonstrate it with $(s(n))_{n}, (s_{\g}(n))_n,$ and $(s_*(n))_n$.
\end{itemize}
\end{example}

Our next definition introduces a more interesting subclass of sequences that are holonomic per range. 

\begin{definition}[Global holonomic sequence] Let $(a(n))_{n\in\NN}$ be holonomic per range with natural partition $\mathcal{I}(a)=(I_j)_{j\in S}$ and holonomic sequences $\mathcal{A}(a)\coloneqq \left((a^{(j)}(n))_{n\in\NN}\right)_{j\in S}$ as previously. We say that $(a(n))_{n\in\NN}$ is globally holonomic with $\mathcal{I}(a)$ if there exists an operator $p$ of order $r$ such that the sequences defined by $p$ and the initial values $a^{(j)}(i)$, $i=0,\ldots,$ $\max(r-1,N)$ are exactly the family $\mathcal{A}(a)$. Here $N$ is the maximum integer root of the leading polynomial coefficient of $p$.
\end{definition}

\begin{example} Consider the sequence $(u(n))_{n\in\NN}$ defined by 
    \[ u(n) \coloneqq \begin{cases} n\, \text{ if } n \leq 11\\ n!\, \text{ otherwise}\end{cases}.\]
    $(u(n))_{n\in\NN}$ is clearly holonomic per range since we have $p\cdot u(n)=0$ for $n\in \llbracket 0, 10 \rrbracket$ due to its formula, where $p=n\sigma - (n+1)$. We also have $q\cdot u(n)=0$ for $n\geq 12$, with $q=\sigma - (n+1)$. Constructing an operator that annihilates the solutions to $p$ and the solutions to $q$ is straightforward. Here is an example:
    \[\rho\coloneqq \left(n^{2}-1\right)\sigma^2 -\left(n+2\right) \left(n^{2}+n-1\right)\sigma +  n \left(n+2\right) \left(n+1\right).\]
    One verifies that the sequences $(n)_{n\in\NN}$ and $(n!)_{n\in\NN}$ can both be defined using $\rho$ and their first three initial values. Therefore $(u(n))_{n\in\NN}$ is globally holonomic.
\end{example}

The proof of the following proposition is straightforward, but it also gives us a necessary condition for a sequence holonomic per range to be holonomic.

\begin{proposition} Every holonomic sequence is globally holonomic.    
\end{proposition}

It might be interesting to look at the converse of this proposition. Nevertheless, this is not needed for our purpose. We have the following diagram.

\[\text{Holonomic} \quad \subset  \quad \text{Globally holonomic} \quad \subset \quad \text{Holonomic per range.}\]

In \Cref{sec3}, we have seen that $s(n)=\alpha_l+\mu_l\,(n-t_l)+\theta_l\,b^{l(n-t_l)}$. Since $\alpha_l+\mu_l\,(n-t_l)$ (see \eqref{eq:eq12})  is a rational (actually polynomial) function in $n$, it can be seen as a single hypergeometric term. Thus, $l$-digit concatenations in $(s(n))_n$ satisfy a second-order holonomic RE. Similarly, $s_{\g}(n) =\alpha_l + \left(\mu_l + \theta_l\,(n-t_l)\right)\, b^{l(n-t_l)}$ (see \eqref{18}) can be seen as a linear combination of two linearly independent hypergeometric terms, and therefore its recurrence equation reduces to a second-order holonomic RE. For $(\sx(n))_n$, no reduction is possible (see \eqref{24}) because its characteristic polynomial has three distinct roots. We mention that the corresponding equations can be computed with the algorithms from \cite{BThyper} and \cite[Section 3.2]{teguia2023hypergeometric}. 

We are now ready to state and prove the main theorem of this section.

\begin{theorem}\label{th:theo4} The sequences $(s(n))_n, (s_{\g}(n))_n$, and $(\sx(n))_n$ are not holonomic.
\end{theorem}
\begin{proof} We prove by contradiction that these sequences are not globally holonomic, thereby proving their non-holonomicity. Let $(a(n))_n$ be any of the sequences $(s(n))_n, (s_{\g}(n))_n$, and $(\sx(n))_n$, and suppose that $(a(n))_n$ is globally holonomic. Let $p\in\rs$ be the minimal operator of order $r$ that annihilates $(a(n))_n$. We can assume that $r\geq 2$ for $(s(n))_n$ and $(s_{\g}(n))_n$, and $r\geq 3$ for $(\sx(n))_n$; the proof will show this is the only choice. Let $l\geq r$ and consider $l$-digit concatenations encoded by the minimal annihilating operator $q_l$. From the previous paragraph, we know that $\ord(q_l)=2$ for $(s(n))_n$ and $(s_{\g}(n))_n$, and $\ord(q_l)=3$ for $(\sx(n))_n$. We combine these two cases by writing $\ord(q_l)=m$.

We can find $l\geq m$ such that $\ord(\gcd(p,q_l))<m$. Indeed, from the explicit formulae in \Cref{sec3}, it follows that for any $i$-digit concatenations and $j$-digit concatenations, $i\neq j$, the respective encoding operators $q_i$ and $q_j$, are such that
\begin{equation}\label{eq:condsol}
\dim\left(\langle \sol(q_i) \rangle \bigoplus \langle \sol(q_j)\rangle\right) > \dim\left(\langle \sol(q_i)\right). 
\end{equation}
This implies that 
\begin{equation}
    \dim(\bigoplus_{i\geq l} \langle \sol(q_i) \rangle) = \infty,
\end{equation}
and therefore cannot be spanned by $\sol(p)$. Thus, there exists $l\geq r$ such that $q_l$ is not a right-divisor of $p$, and so $\ord(\gcd(p,q_l))<m$. However, for such an $l$, for all integer $n\in\llbracket t_l,\,t_{l+1}-1 \rrbracket$, where $t_l$ is the index of the first $l$-digit concatenation, we have
\[q_l\cdot a(n) =0, \,\, \textnormal{and} \,\, p\cdot a(n) =0.\]
We remind that the meaning here is that the solutions of $q_l$ can be expressed as a linear combination with the basis of solutions of $p$. So we must also have $\gcd(q_l,p)\cdot a(n) =0$ for all $n\in\llbracket t_l,\,t_{l+1}-1 \rrbracket$ as the sequence is supposed to be globally holonomic. In other words, the formulae we obtained for $l$-digit concatenations can be deduced from the solutions of lower-order recurrence equations. This is absurd because $q_l$ is minimal. Hence $(s(n))_n$, $(s_{\g}(n))_n$ and $(\sx(n))_n$ are not globally holonomic, and therefore not holonomic. \CQFD
\end{proof}

The non-holonomic character of these sequences can be seen as a consequence of the fact that \eqref{eq:condsol} holds. This condition presents a way to construct non-holonomic sequences. Indeed, since the solutions of the recurrence equations in \Cref{sec2} are obtained with the $m$-fold hypergeometric solver from \cite{teguia2021symbolic}, it follows that concatenations of terms of an increasing integer-valued hypergeometric-type sequence (see \cite{teguia2023hypergeometric}) produce non-holonomic sequences. For instance, concatenations of terms of the sequence of general term $2^n+\chi_{\{n\equiv 1\mmod 2\}}$ yield a non-holonomic sequence. For observation purposes, one can verify that the guessing algorithms from the \texttt{GFUN} package, \cite{kauers2015ore,kauers2022guessing}, or \cite{BTguessing} return no differential equation for the corresponding generating function. On the other hand, we can relate the non-holonomic character of these sequences to the apparent non-existence of their generating functions as a differentiable object. Indeed, using Pad\'{e} approximants, we can accurately approximate their generating functions at every range of concatenations. However, given the `brutal' changes at the endpoints of these ranges, it occurs that if $F$ is the generating function of one of these sequences, then 
\begin{equation}
    (F^{(b^n-2)})'\neq F^{(b^n-1)}, \forall\, n\in\NN\setminus \{0\}.
\end{equation}
This precludes $F$ from being a differentiable function and, therefore, cannot be D-finite.

\subsection{Alternative Proof}

We propose an alternative proof of \Cref{th:theo4}. This is self-contained and independent of the recurrence equations from \Cref{sec2}. We give full details for the case of $(s(n))_n$ and adapt the reasoning for $(s_{\g}(n))_n$. A similar reasoning applies to the case of $(\sx(n))_n$.

\subsubsection{Case of $(s(n))_n$}\label{subsec:sn}

\begin{proof}
As previously, we denote by $t_k$ the index of the first $k$-digit concatenation in $(s(n))_n$. Without loss of generality, we assume that $d<b$. It is immediate to see that $t_k=b^{O(k-1)}$. This is just an approximation of how big $t_k$ is compared to $k$. Suppose that $k$ is very large. We look at the relations between terms of $s(n)$ for $n\in\llbracket t_k, t_{k+1}-1 \rrbracket$. 

Let $A_k=s(t_k)$. Since there are $t_{j+1} -  t_j$ terms of $(u(n))_n$ with $j$ digits, $j\in\NN$, it follows that 
\begin{equation}\label{eq:dblexp}
    A_k \gg b^{\sum_{j=1}^{k-1} (t_{j+1}-t_j)} = b^{O(b^{k-1})},
\end{equation}
a double exponential quantity in $k$. Now suppose that $(s(n))_n$ is holonomic such that 
\begin{equation}\label{eq:hypseq}
    \sum_{j=0}^r s(n+j)\, p_j(n) = 0,
\end{equation}
where the polynomials $p_j(n)\in\ZZ[n]$ are not all zeros, and $r<t_{k+1}$. Observe that
\begin{align*}
  s(t_k) &= A_k,\\
  s(t_k+1) &= A_k\,b^k + u(t_k)+d = A_k\,b^k + b^{O(k)},\\
  s(t_k+2) &= A_k\,b^{2k} + (u(t_k)+d)\,b^k+u(t_k)+2\,d=A_k\,b^{2k} + b^{O(k)}.
\end{align*}
Thus, for all integers $i,j \leq r$
\begin{equation}\label{eq:Ostk}
    s(t_k+i+j) = A_k\,b^{(i+j)k}+b^{O(k)}.
\end{equation}
We plug \eqref{eq:Ostk} into \eqref{eq:hypseq} to get
\[\sum_{j=0}^r s(t_k+i+j) p_j(t_k+i) = \sum_{j=0}^r (A_k\,b^{(i+j)k}+b^{O(k)})p_j(t_k+i) = 0.\]
Note that in the summation above, $b^{O(k)}$ implicitly depends on the summation index $j$. Its explicit form is irrelevant to the arguments of the proof.

Since $p_j(n) = n^{O(1)}$ and $t_k=b^{O(k)}$, we can rewrite the above equation as follows:
\begin{equation}\label{eq:skeyeq}
    A_k \sum_{j=0}^r b^{(i+j)k} p_j(t_k+i) = - \sum_{i=0}^r b^{O(k)} p_j(t_k+i) = b^{O(k)}.
\end{equation}
Thus, the double exponential quantity $A_k$ divides the right-hand side above, which is only single exponential in $k$. Hence, the right-hand side must be $0$, and thus, 
\begin{equation}\label{eq:freeAk}
    \sum_{j=0}^r b^{(i+j)k} p_j(t_k+i) = 0.
\end{equation}
Note that all recurrence equations from \Cref{sec2} satisfy such a relation.

Let $X=t_k+i.$ Thus, $t_k=X-i$. We know that $t_k=\left\lceil \frac{b^{k-1}-s(0)}{d} \right\rceil$. We have
\[(d\,t_k+s(0)) -  d < b^{k-1} \leq d\,t_k+s(0).\]
So, $b^{k-1}=d\,t_k+s(0)+\Delta$, with a fixed $\Delta\in\llbracket -d+1,\, 0\rrbracket$. Let
\begin{equation}\label{eq:fxi}
    f(X-i)\coloneqq b\left(d\,(X-i)+s(0)+\Delta\right) = b^k.
\end{equation}
Then \eqref{eq:freeAk} becomes
\begin{equation}\label{eq:eqfxj}
    \sum_{j=0}^r f(X-i)^{i+j}\,p_j(X)=0.
\end{equation}
Keeping $i$ fixed, the left-hand side is a polynomial in $X$ with infinitely many zeros, namely all $X=t_k+i$ for large $k$'s. Thus, \eqref{eq:eqfxj} is identically $0$ for all $X$. Simplifying $f(X-i)^i$ in \eqref{eq:eqfxj} we get the system
\begin{equation}
    \sum_{j=0}^r f(X-i)^j\,p_j(X)=0, \quad\, i=0,\ldots,r.
\end{equation}
This means that the vector of polynomials $(p_0(X),p_1(X),\ldots,p_r(X))^T$ is orthogonal to the following Vandermonde matrix
\[ V_f\coloneqq
    \begin{bmatrix}
        1      & f(X)    &   f(X)^2    & \cdots &  f(X)^r\\
        1      & f(X-1)  &   f(X-1)^2  & \cdots &  f(X-1)^r\\
        1      & f(X-2)  &   f(X-2)^2  & \cdots &  f(X-2)^r\\
        \vdots & \vdots  &   \vdots    & \ddots &  \vdots\\
        1      & f(X-r)  &   f(X-r)^2  & \cdots &  f(X-r)^r
    \end{bmatrix}.\]
Since all $f(X-i)$'s are distinct, $V_f$ is invertible. This implies that $(p_0(X),p_1(X),\ldots,p_r(X))^T$ is the zero vector, leading thus to the desired contradiction.\CQFD
\end{proof}

This proof may be applied to every sequence of general term $\overline{P(0)P(1)\cdots P(n)}^b$, where $P$ is some positive polynomial over $\ZZ$. The point is that a relation of the form of \eqref{eq:Ostk} can be obtained modulo $b^k$. Such a non-holonomicity is expected since the discussion at the end of \Cref{subsec:mainproof} also covers this case.

\subsubsection{Case of $(s_{\g}(n))_n$}\label{subsec:sr}

\begin{proof}
   We wish to obtain an equation in the form of \eqref{eq:freeAk}.
   
   Let $A_k=s_{\g}(t_k)$, with $t_k$ defined as previously. Let $\nu_k$ be the digit length of $A_k$. By analogy to the computation in \eqref{60}, one establishes that $\nu_k=kb^{O(k)}$. This implies that $A_k\gg b^{\nu_k} = b^{O(kb^k)}$. For all integers $i,j\leq r<t_{k+1}$, we have
   \begin{eqnarray}\label{eq:Osrtk}
        s_{\g}(t_k+i+j) &=& A_k + b^{\nu_k}\left(\sum_{l=1}^{i+j} ( u\! \left(t_k\right)+l\,d)b^{(l-1)\,k}\right)\nonumber\\
                     &=& A_k + b^{\nu_k}\, C(i,j,k), \label{eq:srijk}
    \end{eqnarray}
where 
\begin{equation}\label{eq:cij}
    C(i,j,k)=\frac{\left(\left(d\left(i+j\right)+u\! \left(t_{k}\right)\right) b^{k}-u\! \left(t_{k}\right)-d\left(1+i+j\right)\right) \left(b^{k}\right)^{i+j}-u\! \left(t_{k}\right) b^{k}+u\! \left(t_{k}\right)+d}{\left(b^{k}-1\right)^{2}}.
\end{equation}
The holonomic equation for $s_{\g}(t_k+i+j)$ is then equivalent to
\begin{equation}
    A_k \sum_{j=0}^r p_j(t_k+i) = - b^{\nu_k}\,\sum_{j=0}^r C(i,j,k)\,p_j(t_k+i).
\end{equation}
One may assume that $A_k$ and $b^{\nu_k}$ have a bounded $\gcd$. This is clearly the case when $u(0)$ and $b$ are coprime, for example. For the general case, note that we can always ignore any number of starting terms from our concatenations. For example, since the sequence of general term $s_{\g}(n)$ is holonomic, so is the sequence of general term $(s_{\g}(n)-u(0))/b^{\ell_0}$, where $\ell_0$ is the number of base $b$ digits of $u(0)$. This is nothing else but the sequence whose general term is the left--concatenation of $u(1),~u(2),~\ldots$ (with $u(0)$ removed). Proceeding in this way, we may assume that we remove the first $j$ terms and start with $u(j)=u(0)+dj$, where $j$ is large enough such that the following two conditions are satisfied:
\begin{itemize}
\item[(i)] Putting $D:=\gcd(u(0),d)$, we have $u(j)=D\rho$, where $\rho>b$ is prime.
\item[(ii)] Putting $\ell_j$ for the number of digits of $u(j)$ in base $b$, we have $\ell_j>\max\{\mu_{\rho}(D): \rho\mid b\}$, where $\ell_j$ is the number of digits of $u(j)$ in base $b$.
Here, $\mu_{\rho}(D)$ is the exponent of $\rho$ in the factorization of $D$. 
\end{itemize}
The existence of $j$ above follows from Dirichlet's theorem on prime in progressions. Then with these choices, one can see that $\gcd(A_k,b^{\nu(k)})\le D$ for all $k\ge 1$. 
Let $A_k'=A_k/\gcd(A_k,b^{\nu_k})$, and $b_{\nu_k}'=b^{\nu_k}/\gcd(A_k,b^{\nu_k})$. Thus, $A_k'$ divides the sum on the right-hand side, and $b_{\nu_k}'$ divides the one on the left-hand side. However, 
\begin{equation}
    \sum_{j=0}^r p_j(t_k+i) = b^{O(k)}, \quad\, \sum_{j=0}^r C(i,j,k)\,p_j(t_k+i)= b^{O(k)},
\end{equation}
while $A_k'$ and $b_{\nu_k}'$ are both double exponential. Therefore, we must have
\begin{align}
    &\sum_{j=0}^r p_j(t_k+i)=0, \label{eq:scoef}\\
    &\sum_{j=0}^r C(i,j,k)\,p_j(t_k+i)=0. \label{eq:rhssr}
\end{align}
Remark that \eqref{eq:scoef} is verified by all equations in \Cref{sec2}: the sum of the coefficients is zero. Using \eqref{eq:scoef} and \eqref{eq:cij}, we simplify \eqref{eq:rhssr} and get the equation
\begin{equation}
    \sum_{j=0}^r \left(u\! \left(t_{k}+i+j\right)b^k-u\! \left(t_k+i+j+1\right)\right) b^{k(i+j)} p_j(t_k+i) = 0.
\end{equation}
Following the same reasoning as previously, we arrive at the following system of equations
\begin{equation}
    \sum_{j=0}^r g\left(f(X-i),i+j\right) f(X-i)^j p_j(X) = 0,\quad\, i=0,\ldots,r,
\end{equation}
where $f$ keeps a similar definition as before, and $g$ is defined appropriately, noting that $u(t_k) = s_{\g}(0)+d\,t_k= f(X-i)/b-\Delta$. One verifies that the resulting matrix is full-rank and obtains the desired contradiction.
\CQFD 
\end{proof}

\section{Computations}\label{sec5}
	
 By Theorems \ref{theo1}, \ref{theo2}, and \ref{theo3}, one can write algorithms for efficient computation of terms of our concatenating sequences. We implemented the particular cases of Smarandache numbers $(\sm(n))_{n\in\NN}$ and their reverse $(\smr(n))_{n\in\NN}$ in Maple. The resulting code and software are available from the link \url{https://github.com/T3gu1a/Concatenations}. 
	
\begin{algorithm}[!ht]
    \caption{$\sm(n)$}\label{Algo1}
    \begin{algorithmic}
        \Require  A non-negative integer $n$.
        \Ensure $\sm(n)$: the $(n+1)^{\text{st}}$ Smarandache number.
        \begin{enumerate}
            \item if $\sm(n)$ is defined then stop and return $\sm(n)$
            \item $l=\lceil\log_{10}(n+2)\rceil$	
            \item if $\alpha_l$ is not defined then
                \begin{enumerate}
                    \item $d=(10^l-1)^2$
                    \item $t_l=10^{l-1}-1$ and save $t_l$
                    \item $\alpha_l=-\left(10^{2l-1}+9\cdot10^{l-1}\right)/d$ and save $\alpha_l$
                    \item $\mu_l=-1/\left(10^l-1\right)$ and save $\mu_l$
                    \item $\theta_l = \texttt{numtheaSm(l)}/d$ and save $\theta_l$
                \end{enumerate} 
            \item Return and save $\sm(n)=\alpha_l+\mu_l\cdot\left(n-t_l\right) +\theta_l\cdot10^{l(n-t_l)}$
        \end{enumerate}
    \end{algorithmic}
\end{algorithm} 

The numerator of $\theta_l$ is computed by \Cref{Algo2}.

\begin{algorithm}[!ht]
    \caption{\texttt{numthetaSm(l)}}\label{Algo2}
    \begin{algorithmic}
        \Require  A positive integer $l$.
        \Ensure The numerator of $\theta_l$ in Algorithm \ref{Algo1}.
        \begin{enumerate}
            \item if $l=1$ then return $100$
            \item $s_0=\texttt{conc}_l(\sm(t_l-1),t_l+1)$
            \item $s_1=\texttt{conc}_l(s_0,t_l+2)$
            \item $s_2=\texttt{conc}_l(s_1,t_l+3)$
            \item return $s_2-2\cdot s_1 + s_0$
        \end{enumerate}
    \end{algorithmic}
\end{algorithm} 

The concatenation $\texttt{conc}_l$ is defined as: $(a,b) \mapsto a\cdot 10^{l}+b$. \Cref{Algo1} uses a remembering effect for $t_l$ $\alpha_l$, $\mu_l$, $\theta_l$, and the returned values. This helps to avoid computing the same values several times and is especially needed for the coefficients $\alpha_l$, $\mu_l$, and $\theta_l$, which are used several times. For $\smr(n)$, the algorithm is similar to \Cref{Algo1}. The formulae are the only changes, and $\nu_l$ is also computed with a remembering effect.

We now compare the efficiency of the resulting implementation with the other existing codes for $\sm(n)$ and $\smr(n)$ in Maple (see the codes from \cite{torres2004smarandache},  \href{http://oeis.org/A007908}{oeis A007908}, and \href{https://oeis.org/A000422}{oeis A000422}). We use Maple 2023. The most efficient for $\sm(n)$ seems to be the following.
\begin{lstlisting}
> sm := n-> parse(cat(`$`(n+1))):
\end{lstlisting}
For reverse Smarandache numbers, we define \texttt{smr} similarly.
	
\begin{lstlisting}
> smr := proc(n, $) local i; parse(cat((n + 1 - i) $ (i = 0..n))) end proc:
\end{lstlisting}
We do not consider recursive implementations because we will evaluate at very distant indices. The recursive approach is more suitable for printing out the `triangle of the gods': printing consecutive terms of $(\sm(n))_{n\in\NN}$, one per line. We use the Maple command \texttt{CPUtime} from the package \texttt{CodeTools} to display the CPU times. The computations are summarised below in \Cref{tab1} and \Cref{tab2}.

\begin{table}[!ht]
    \begin{center}
    \caption{\texttt{Smarandache:-Sm} vs \texttt{sm}}
    \label{tab1}
    \begin{tabular}{|l|l|l|l|l|}
    \hline
    $l$                                                           & $5$    & $6$      & $7$          & $8$       \\ \hline
    \texttt{CPUTime}(\texttt{Smarandache:-Sm}($10^l-1$))         & $0.046$ & $0.125$  & $1.766$      & $31.532$     \\ \hline
    \texttt{CPUTime}(\texttt{sm}($10^l-1$))                      & $0.079$ & $0.719$  & $10.969$     & $208.391$ \\ \hline
    \end{tabular}
\end{center}
\end{table}
 
One observes that \texttt{Smarandache:-Sm} is faster than \texttt{sm} for asymptotic computations. However, these two codes can be combined to compute closer terms using \texttt{sm} and distant terms using \texttt{Smarandache:-Sm}.

\begin{table}[!ht]
    \begin{center}
    \caption{\texttt{Smarandache:-Smr} vs \texttt{smr}}
    \label{tab2}
    \begin{tabular}{|l|l|l|l|l|}
        \hline
        $l$                                                        & $5$      & $6$     & $7$      & $8$           \\ \hline
        \texttt{CPUTime}(\texttt{Smarandache:-Smr}($10^l-1$))      & $0.016$  & $0.516$ & $7.313$  & $123.657$     \\ \hline
        \texttt{CPUTime}(\texttt{smr}($10^l-1$))                   & $0.047$  & $1.047$ & $12.921$ & $215.765$     \\ \hline
    \end{tabular}
\end{center}
\end{table}

Reverse Smarandache numbers are more difficult to compute with mathematical formulae. The presence of $\nu_l$ explains this (see Corollary \ref{cor2} and Theorem \ref{theo2}). It also explains why the coefficients $\alpha_l$, $\mu_l$, and $\theta_l$ have more decimal digits in left-concatenations. And again, the mathematical formulae yield a more efficient implementation.
	
\section{Conclusion}\label{sec6}
	
The main results of this article are given by \Cref{theo1}, \Cref{theo2}, \Cref{theo3}, and \Cref{th:theo4}. The latter is particularly interesting to the community in difference algebra, as our given proof reveals a strategy for generating non-holonomic sequences. The obtained formulae for concatenations of terms of an arithmetic progression enabled us to exhibit algorithms for computing terms of the sequences \href{http://oeis.org/A007908}{oeis A007908}, \href{https://oeis.org/A000422}{oeis A000422}, and their term-wise concatenation \href{https://oeis.org/A173426}{oeis A173426}. Another sequence in this context is the concatenation of odd integers, a particular case of \Cref{theo1} in the decimal base with the common difference $d=2$. We implemented the formulae for Smarandache numbers and their reverses (\Cref{cor1} and \Cref{cor2}) in Maple. The resulting software is available on GitHub at \url{https://github.com/T3gu1a/Concatenations}.

\medskip

\textbf{Acknowledgment.} We thank the anonymous referees for their helpful comments and stimulating discussions, which prompted us to introduce Definitions 2 and 3. We thank Andrew Scoones and James Worrell for numerous helpful discussions. Both authors were partly supported by UKRI Frontier Research Grant EP/X033813/1.

\end{document}